\newtheorem{theorem}{Theorem}[section]
\newtheorem{lemma}[theorem]{Lemma}
\numberwithin{equation}{subsection}
\newtheorem{definition}[theorem]{Definition}
\title{Prime Avoidance Property of $k$-th Powers of Piatetski-Shapiro Primes}
\author{Helmut Maier and Michael Th. Rassias}
\date{\today}
\address{Department of Mathematics, University of Ulm, Helmholtzstrasse 18, 89081 Ulm, Germany.}
\email{helmut.maier@uni-ulm.de}
\address{Department of Mathematics and Engineering Sciences,
Hellenic Military Academy,
16673 Vari Attikis, Greece 
\& Institute for Advanced Study, Program in Interdisciplinary Studies,
1 Einstein Dr, Princeton, NJ 08540, USA.}\email{michail.rassias@math.uzh.ch}\thanks{}
\begin{document}

 \maketitle
 
\begin{abstract} 
In previous papers the authors established the prime avoidance property of $k$-th powers of prime numbers and of prime numbers within Beatty sequences. In this paper the authors consider $k$-th powers of Piatetski-Shapiro primes.\\
  
\noindent\textbf{Key words.} Piatetski-Shapiro primes; $k$-th powers.\\ 
\noindent\textbf{2010 Mathematics Subject Classification:} 11P32, 11N05, 11A63.%
\newline
\end{abstract}

\section{Introduction}
The term $``$prime avoidance'' first appears in the paper \cite{konyagin} of K. Ford, D. R. Heath-Brown and S. Konyagin, where they prove the existence of infinitely many ``prime-avoiding'' perfect $k$-th powers for any positive integer $k$. Their definition is as follows:

\begin{definition}\label{defn11}
Let 
\[
g_1(m):=\frac{\log m\: \log_2m\: \log_4 m}{(\log_3 m)^2}\:. \tag{1.1}
\]
Here $\log_k x:=\log(\log_{k-1} x)$. Then an integer $m$, for which $g_1(m)\geq m^{(0)}$ (with $m^{(0)}>0$ being a fixed constant) is called prime-avoiding with constant $c_0>0$  if $m+u$ is composite  for all integers $u$ satisfying $|u|\leq c_0 g_1(m)\:.$ We also say, $m$ has the prime avoidance property with constant $c_0$.
\end{definition}
For the sake of larger flexibility we also consider the following variations:

\begin{definition}\label{def12}
Let $h(m)$ be defined for $m\geq m^{(1)}$ and let $h(m)\rightarrow \infty$ for $m\rightarrow \infty$. Let $c_1>0$ be a fixed constant.
\begin{enumerate}[(i)]
\item Then an integers $m\geq m^{(1)}$ (with $m^{(1)}>0$ being a fixed constant) is called prime-avoiding with constant $c_1$ and function $h$ if $h(m)\geq m^{(1)}$ and $m+u$ is composite for all integers $u$ satisfying $|u|\leq h(m)$. We also say, that $m$ has the prime avoidance property with constant $c_1$ and function $h$.
\item $m\geq m^{(1)}$ is called prime-avoiding to the right with function $h$ and constant $c_2$ if $m+u$ is composite for all integers $u\in [0, c_2 h(m)]$. We say that $m$ has the prime-avoidance property to the right.\\
An analogous definition is to apply for ``prime-avoiding to the left'' and ``prime avoidance property to the left''.
\item The terms ``one-sided prime-avoidance property'' is used, if prime-avoidance property to the right, to the left of both hold.\\
\end{enumerate}
\end{definition}
The function $g_1$ in (1.1) has played a central role for a long time in the study of large gaps between consecutive primes. The connection between these questions can easily be seen as follows:\\
Let $m$ be prime-avoiding with function $h$ and constant $c_3>0$ and let $p_n$ be the largest prime number $\leq m$, with $p_{n+1}$ being the subsequent prime number. Then 
$$p_{n+1}-p_n\geq 2c_1h(m)\:.$$
Also the reverse $-$ large gaps imply the existence of prime-avoiding numbers can be seen easily.

\section{Large gaps between primes}

The fact that 
$$\limsup_{n\rightarrow \infty} \frac{p_{n+1}-p_n}{\log p_n}=\infty$$
was first proved by Westzynthius \cite{Westzynthius}. Erd\H os \cite{erdos} obtained that infinitely often one has:
\[
p_{n+1}-p_n > C_1\: \frac{\log p_n\:\log_2 p_n}{(\log_3 p_n)^2}  \tag{2.1}
\]
with appropriate $C_1>0$.\\
The order of magnitude of the lower bound (2.1) was slightly improved by Rankin \cite{rankin} who showed that
\[
p_{n+1}-p_n> C_2 \: \frac{\log p_n\:\log_2 p_n\:\log_4 p_n}{(\log_3 p_n)^2}= C_2 g_1(p_n)\:.
\tag{2.2}
\]
The proof of Rankin differs only slightly from that of Erd\H os. Rankin is using a better estimate for the number of smooth integers. The approach of Rankin and modifications of it have become known as the Erd\H os-Rankin method. In \cite{erdos} and \cite{rankin} these two authors construct a long interval of integers which all have greatest common divisor larger than 1 with 
$$P(x):=\prod_{p<x} p\:.$$
Their result also implies a prime-avoidance result for most integers of that interval. A modified version of this method was applied by the authors of \cite{konyagin} on the problem of prime-avoiding $k$-th powers mentioned in the introduction.\\
In \cite{mr2} the authors of the present paper extended this result by proving the existence of infinitely many prime-avoiding $k$-th powers of prime numbers. For the sake of simplicity they only treat one-sided prime avoidance. They prove the following:

\textit{There are infinitely many $n$, such that
$$p_{n+1}-p_n\geq C_3 g_1(n)$$
and the interval $[p_n, p_{n+1}]$ contains the $k$-th power of a prime number.}

Their method of proof consists in a combination of the method of \cite{konyagin} with the matrix method of the first author \cite{maier}. The matrix $\mathcal{M}$ employed in this technique is of the form
$$\mathcal{M}:=(a_{r, u})_{\substack{1\leq r\leq P(x)^{D-1}\\ u\in\mathcal{B}}}\:,$$
where $D$ is a fixed positive integer, where the \textit{rows} 
$$\mathcal{R}(r):=\{ a_{r,u}\::\: u\in \mathcal{B}\}$$
of the matrix are \textit{translates} -- in closer or wider sense -- of the \textit{base-row} $\mathcal{B}$.\\
The essential idea of the construction of the base-row $\mathcal{B}$ is the Erd\H ow-Rankin method. Here however -- as in all combinations with the matrix method -- the base-row is not completely free of numbers not coprime to $P(x)$. The columns
$$\mathcal{C}(u):=(a_{r,u})_{1\leq r\leq P(x)^{D-1}}\:,\ \ \text{with $(a, P(x))=1$}$$
are called \textit{admissible columns}.\\
The number of prime numbers in these columns, which are arithmetic progressions (or in the case of \cite{mr2} -- shifted powers of elements of arithmetic progressions), can be estimated using theorems on primes in arithmetic progressions.\\
It was a famous prize problem of Erd\H os, being open for more that 70 years, to replace the function $g_1$ in (2.2) by a function of higher order of magnitude. This problem could finally be solved in the paper \cite{ford} by K. Ford, B. J. Green, S. Konyagin and T. Tao  and independently in the paper  \cite{may2} by J. Maynard. Later all five authors improved on this result in their joint paper \cite{ford2}. They proved:
$$p_{n+1}-p_n\geq C_4\: g_2(p_n)$$
infinitely often, where
$$g_2(m):=\frac{\log m\:\log_2 m\:\log_4 m}{\log_3 m}\:.$$ 
In the paper \cite{mr2} the authors of the present paper combined the methods of the papers \cite{konyagin}, \cite{ford2} and  \cite{maier} to
obtain the following theorem.\\

\noindent\textbf{Theorem 1.1 of \cite{mr2}}\\
\textit{There is a constant $C_5>0$ and infinitely many $n$, such that
$$p_{n+1}-p_n\geq C_5\:g_2(n)$$
and the interval $[p_n,p_{n+1}]$ contains the $k$-th power of a prime.}

\noindent Using the Definition \ref{def12} this can also be phrased as follows:\\
There is a constant $C_5>0$, such that infinitely many $k$-th powers of primes are one-sided prime-avoidant with constant $\frac{1}{2} C_5$. In the paper \cite{maier_beatty} the authors investigated the prime-avoidance of the $k$-th powers of prime numbers with Beatty sequences.

\begin{definition}\label{def21}
For two fixed real numbers $\alpha$ and $\beta$, the corresponding non-homogeneous Beatty sequence is the sequence of  integers defined by
$$\mathscr{B}_{\alpha,\beta}:=(\lfloor \alpha n+\beta\rfloor)_{n=1}^{\infty}$$
($\lfloor \cdot \rfloor$ denotes the floor function: $\lfloor u \rfloor$  the largest integer $\leq u$).
\end{definition}

\begin{definition}\label{def22}
For an irrational number $\gamma$ we define its type $\tau$ by the relation
$$\tau:=\sup\{ \rho\in\mathbb{R}\::\: \lim\inf\: n^\rho \|\gamma n\|=0\}\:\ \ \text{(see \cite{maier_beatty}).}$$
\end{definition}

\noindent In the paper \cite{maier_beatty} the authors prove:

\noindent\textbf{Theorem 1.3 of \cite{maier_beatty}.}\\
\textit{Let $k\geq 2$ be an integer. Let $\alpha, \beta$ be fixed real numbers with $\alpha$ being a positive irrational and of finite type. Then there is a constant $C_6>0$, depending only on $\alpha$ and $\beta$, such that for infinitely many $n$ we have:
$$p_{n+1}-p_n\geq C_6\: g_2(n)$$
and the interval $[p_n, p_{n+1}]$ contains the $k$-th power of a prime $\tilde{p}\in\mathscr{B}_{\alpha,\beta}$.}

\noindent In this paper we deal with \textit{Piatetski-Shapiro primes}. We prove:

\begin{theorem}\label{thm21}
Let $c_4\in(1,18/17)$ be fixed, $k\in\mathbb{N}$, $k\geq 2$. Then there is a constant $C_6>0$, depending only on $k$ and $c_4$, such that for infinitely many $n$ we have 
$$p_{n+1}-p_n\geq C_6 g_2(n)$$
and the interval $[p_n, p_{n+1}]$ contains the $k$-th power of a prime $\tilde{p}=[l^{c_4}]\:.$
\end{theorem}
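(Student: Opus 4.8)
The plan is to graft a Piatetski--Shapiro input onto the combined machinery of \cite{konyagin}, \cite{ford2} and \cite{maier} that already yields Theorem 1.1 of \cite{mr2}, the only genuinely new ingredient being an equidistribution estimate for the primes $\tilde p=[l^{c_4}]$ in the arithmetic progressions attached to the matrix. Fix an integer $D>k$, let $x\to\infty$ through a sequence of parameters, put $P(x)=\prod_{p<x}p$, and form the matrix $\mathcal M=(a_{r,u})$ with $a_{r,u}=u+rP(x)$, $u\in\mathcal B$, $1\le r\le P(x)^{D-1}$, so that the admissible columns are the progressions $u+rP(x)$ with $(u,P(x))=1$, whose entries have all prime factors exceeding $x$. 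The entries run up to $N:=P(x)^{D}$, and $x\asymp D^{-1}\log N$.

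First I would produce the gap exactly as in \cite{ford2}: the Ford--Green--Konyagin--Tao weighting of residues modulo the primes $p<x$ arranges the base--row $\mathcal B$ so that a sub--interval of length $\gg g_2(N)$ is covered except for a sparse survivor set $\mathcal S$ of admissible columns, and I would fix one distinguished admissible residue $u^{*}$ in its interior to host the $k$-th power. Since $P(x)$ is squarefree, choosing $u^{*}$ to be a $k$-th power residue modulo every $p<x$ turns the requirement $a_{r,u^{*}}=\tilde p^{\,k}$ into the congruence $\tilde p\equiv b\pmod{P(x)}$ for each of a bounded number of admissible roots $b$, with $\tilde p$ running up to $z:=N^{1/k}=P(x)^{D/k}$. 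Thus everything reduces to the set of \emph{candidates}
\[
\mathcal P:=\{\tilde p\le z:\ \tilde p=[l^{c_4}]\ \text{is prime},\ \tilde p\equiv b\pmod{P(x)}\},
\]
because each $\tilde p\in\mathcal P$ places the composite number $\tilde p^{\,k}$ inside the prospective gap of the row $r=(\tilde p^{\,k}-u^{*})/P(x)$, the only positions of that row that could still carry a prime being the survivors $\tilde p^{\,k}+s$, $s\in\mathcal S$.

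The heart of the matter, and the step I expect to be hardest, is a lower bound of the expected order
\[
\#\mathcal P\ \gg\ \frac{1}{\phi(P(x))}\cdot\frac{z^{1/c_4}}{\log z},
\]
i.e. equidistribution of Piatetski--Shapiro primes in progressions to the modulus $P(x)=z^{k/D}$, a fixed (and, once $D$ is large, small) power of the length $z$. Here I would detect the two arithmetic conditions separately: primality through $\Lambda$, and the relation $\tilde p=[l^{c_4}]$ through the representation of its indicator function as $[-n^{1/c_4}]-[-(n+1)^{1/c_4}]$, together with $[t]=t-\tfrac12-\psi(t)$ and the Fourier expansion of the sawtooth $\psi$. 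After isolating the main term this leaves, for each frequency $h\ge1$, exponential sums of the shape $\sum_{n\equiv b\,(P(x))}\Lambda(n)\,e\!\left(h\,n^{1/c_4}\right)$ with $e(\theta)=\exp(2\pi\mathrm i\theta)$, which I would treat by Vaughan's identity followed by van der Corput / exponent--pair estimates for the resulting type I and type II sums. It is precisely the balance between the modulus $z^{k/D}$, the saving needed from these estimates, and the admissible range of $h$ that forces the restriction $c_4\in(1,18/17)$, and securing this bound \emph{uniformly} for the progression modulo $P(x)$ is the principal difficulty of the whole argument.

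Finally I would run a first--moment argument over $\mathcal P$ to discard the candidates whose row still contains a prime. A candidate $\tilde p$ is \emph{bad} if some survivor $\tilde p^{\,k}+s$ $(s\in\mathcal S)$ is prime, so that
\[
\#\{\text{bad }\tilde p\}\ \le\ \sum_{s\in\mathcal S}\#\{\tilde p\in\mathcal P:\ \tilde p^{\,k}+s\ \text{prime}\},
\]
and an upper--bound sieve applied to the sparse sequence of Piatetski--Shapiro primes, exploiting the extra factor $(\log z)^{-1}$ gained from the primality of $\tilde p^{\,k}+s$, bounds each inner count by $O\!\big(z^{1/c_4}(\log z)^{-2}\phi(P(x))^{-1}\big)$. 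Since the \cite{ford2} construction makes $\#\mathcal S\ll x/\log x\asymp\log z/\log_2 z=o(\log z)$, the total number of bad candidates is $o(\#\mathcal P)$, so good candidates survive. For each such $\tilde p$ the interval about $\tilde p^{\,k}$ is prime--free over a length $\gg g_2(N)\asymp g_2(n)$, where $p_n<\tilde p^{\,k}<p_{n+1}$; letting $x\to\infty$ yields infinitely many $n$ with $p_{n+1}-p_n\ge C_6\,g_2(n)$ whose gap contains the $k$-th power of the Piatetski--Shapiro prime $\tilde p=[l^{c_4}]$, which is the assertion of the theorem.
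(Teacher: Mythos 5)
Your overall architecture coincides with the paper's: a matrix indexed by translates modulo $P(x)$ with an Erd\H os--Rankin/\cite{ford2}-type base row, a reduction of the condition ``$\tilde p^{\,k}$ lies in the gap'' to counting Piatetski--Shapiro primes in arithmetic progressions to moduli $P(x)$ and $\tau P(x)$, and a first-moment/upper-bound-sieve argument over the sparse survivor set to show some candidate row contains no prime (the paper's $\mathcal{R}_1(\mathcal{M})\setminus\mathcal{R}_2(\mathcal{M})\neq\emptyset$, via its Lemma \ref{lem52}, the Fundamental Lemma \ref{lem53}, and the bound $\#V\ll x^{1/2+\epsilon}$). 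Your structural variant --- a linear matrix $a_{r,u}=u+rP(x)$ with a distinguished $k$-th-power-residue column $u^{*}$, versus the paper's rows $a_{r,u}=(m_0+1+rP(x))^{k}+u-1$ whose first entries are the $k$-th powers --- is an equivalent reparametrization, and your moment bookkeeping ($\#\mathcal{S}$ times a per-survivor saving of one logarithm) closes for the same reason the paper's does. One harmless slip: the number of roots of $w^{k}\equiv u^{*}\bmod P(x)$ is not bounded (it is $\prod_{p<x}\#\{w\bmod p: w^k\equiv u^*\}$, exponentially large in $\pi(x)$); but fixing a single root $b$ suffices for the lower bound, so this does not damage the argument.

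The genuine gap is in the step you yourself flag as the principal difficulty. You propose to prove the equidistribution $\#\mathcal{P}\gg z^{1/c_4}/(\phi(P(x))\log z)$ directly by Vaughan's identity plus van der Corput/exponent-pair estimates for sums $\sum_{n\equiv b\,(P(x))}\Lambda(n)e(hn^{1/c_4})$. This cannot work as stated: after detecting the progression by Dirichlet characters, the zero-frequency/principal-character part is exactly the count of \emph{ordinary} primes in a progression to the modulus $P(x)=z^{k/D}$, a fixed power of the length, and no exponential-sum technology yields an asymptotic there --- a putative exceptional (Siegel) zero to a character modulo a divisor of $P(x)$ would skew the counts for every admissible $b$. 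This is why the paper restricts $x$ to a special sequence along which $P(x)$ is a \emph{good modulus} (Lemma \ref{lem32}, from Lemma 3.7 of \cite{mr2}) and then applies Gallagher's log-free density estimate (Lemma \ref{lem33}) to the non-principal characters; your ``let $x\to\infty$ through a sequence of parameters'' never invokes this, and without it your main term is unobtainable. The paper also decouples the two difficulties rather than mixing them: it cites Theorem 8 of \cite{Banks} (Lemma \ref{lem42}), which performs the exponential-sum analysis \emph{uniformly in the modulus $d$} and expresses $\pi_{c_6}(w;d,a)$ in terms of $\pi(w;d,a)$ with error $O\bigl(w^{17/39+7\gamma/19+\epsilon}\bigr)$ --- this is where the constraint $c_4\in(1,18/17)$ actually originates, rather than from any balance you would have to strike yourself --- and only then feeds in the good-modulus/Gallagher input for $\pi(w;\tau P(x),a)$, both in Lemma \ref{lem52} and inside the sieve estimate for $N(v)$. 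To repair your proposal you should either quote \cite{Banks} as the paper does, or, if re-deriving it, add the good-modulus selection of $x$ and the density estimate as explicit ingredients; the exponent-pair work alone, however sharp, does not reach the progression modulo $P(x)$.
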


\noindent Using again Definition \ref{def12} we can phrase Theorem \ref{thm21} as follows:

\begin{theorem}\label{thm21'}
Let $c_4\in(1,18/17)$ be fixed, $k\in\mathbb{N}$, $k\geq 2$. Then there is a constant $C_7>0$, such that infinitely many $k$-th powers of primes of the form $\tilde{p}=[l^{c_4}]$ have the one-sided prime avoidance property with constant $C_7$.
\end{theorem}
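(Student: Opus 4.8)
The plan is to carry the matrix method of \cite{mr2} -- which fuses the Erd\H os--Rankin construction in the strong form of \cite{ford2} with the matrix method of \cite{maier} -- over to the Piatetski-Shapiro setting, the only genuinely new ingredient being an equidistribution estimate for the primes $\tilde p=[l^{c_4}]$ in arithmetic progressions. Fix a large parameter $x$, set $P=P(x)=\prod_{p<x}p$ (so that $\log P\sim x$), and fix a large integer $D=D(k)$ to be chosen at the end. The columns of the matrix are the ``shifted powers of elements of arithmetic progressions'' referred to in the introduction: for a residue $a$ with $(a,P)=1$ the associated column is the set of $k$-th powers $\{(a+rP)^{k}\}$ of the candidate primes $\tilde p=a+rP$ running through one residue class modulo $P$, with $r$ confined to the dyadic block making $\tilde p^{k}\asymp P^{D}$.

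The first task is to locate a residue $a$ for which every column entry $\tilde p^{k}=(a+rP)^{k}$ begins a long composite run. Since $\tilde p\equiv a\pmod p$ for each $p<x$, one has $\tilde p^{k}+j\equiv a^{k}+j\pmod p$, so $\tilde p^{k}+j$ is divisible by $p$ exactly when $a^{k}\equiv -j\pmod p$, independently of $r$. Thus it suffices to choose $a$ so that each $j\in\{1,\dots,L\}$ is covered by some prime $p_j<x$ with $a^{k}\equiv -j\pmod{p_j}$; by the Chinese Remainder Theorem this only constrains $a$ modulo $\prod_j p_j\mid P$, and the remaining factors of $P$ can be used to keep $(a,P)=1$. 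I would run the covering of $\{1,\dots,L\}$ by the method of \cite{ford2}, subject to the extra requirement that the prime $p_j$ assigned to $j$ satisfy that $-j$ be a $k$-th power residue modulo $p_j$; this requirement removes only a bounded proportion of the admissible primes and leaves the construction intact, so one still reaches $L\asymp \frac{x\log x\,\log_3 x}{\log_2 x}$. With $n=\tilde p^{k}\asymp P^{D}$ one checks $\log n\asymp Dx$, $\log_2 n\asymp\log x$, $\log_3 n\asymp\log_2 x$, $\log_4 n\asymp\log_3 x$, whence $L\asymp D^{-1}g_2(n)$; any prime $\tilde p\equiv a\pmod P$ then yields $\tilde p^{k}+1,\dots,\tilde p^{k}+L$ all composite, which is the one-sided prime-avoidance with a constant $C_7=C_7(k,c_4)>0$, together with the gap $p_{n+1}-p_n\ge C_6\,g_2(n)$.

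Everything now reduces to producing, for arbitrarily large $x$, at least one Piatetski-Shapiro prime in the prescribed class: $\tilde p=[l^{c_4}]$ prime, $\tilde p\equiv a\pmod P$, and $\tilde p$ in a dyadic range $[Y,2Y]$ with $Y\asymp P^{D/k}$. The modulus here is $P\asymp Y^{k/D}$, a small fixed power of $Y$ once $D$ is large, and the expected count $\asymp \phi(P)^{-1}Y^{1/c_4}(\log Y)^{-1}$ in the class exceeds $1$ as soon as $D>kc_4$, which I arrange. The main obstacle is therefore the uniformity of the count: what is needed is an asymptotic for the number of Piatetski-Shapiro primes $\tilde p\in[Y,2Y]$ with $\tilde p\equiv a\pmod P$ that holds uniformly for moduli $P$ up to a fixed power $Y^{\theta}$ of the range and saves a power of $Y$ over the main term. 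Detecting the condition $\tilde p=[l^{c_4}]$ through the jumps of $\lfloor -\tilde p^{1/c_4}\rfloor$ reduces this to bounding exponential sums twisted by the progression, which one treats by a Vaughan-type decomposition together with van der Corput / exponent-pair estimates; these bounds are available precisely for $c_4\in(1,18/17)$, and this is the sole source of the restriction on $c_4$. Granting the estimate the class is non-empty, and since $x\to\infty$ forces $\tilde p^{k}\asymp P^{D}\to\infty$, the $k$-th powers so produced are infinite in number, which proves the theorem.
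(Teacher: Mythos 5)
You have assumed away the paper's central difficulty. Your first step claims the covering of $\{1,\dots,L\}$ can be made \emph{complete} under the extra constraint that each position be covered by a prime $p_j$ for which the relevant residue is a $k$-th power residue, on the grounds that this ``removes only a bounded proportion of the admissible primes and leaves the construction intact.'' That is exactly what the known construction does not deliver: the $g_2$-strength covering of \cite{ford2} commits essentially all primes below $x$, and the power-residue solvability condition couples positions to primes in a way that cannot simply be absorbed. What Ford, Heath-Brown and Konyagin actually prove (Lemma 3.10 of \cite{konyagin}, quoted here as Lemma \ref{lem34}) is compositeness of $(m_0+1)^k+u-1$ only \emph{outside an exceptional set} $V$ with $\#V\ll x^{1/2+\epsilon}$, and the entire matrix apparatus of the paper exists to deal with $V$: one needs not a single Piatetski-Shapiro prime in the class $m_0+1 \bmod P(x)$ but an asymptotic \emph{count} of them (Lemma \ref{lem52}), and then an upper-bound sieve (the fundamental lemma, Lemma \ref{lem53}, combined with Gallagher's estimate, Lemma \ref{lem33}) to show that the rows having a prime at some position $v\in V$ are strictly fewer than the rows available, so that $\mathcal{R}_1(\mathcal{M})\setminus\mathcal{R}_2(\mathcal{M})\neq\emptyset$. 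None of this appears in your proposal because the complete-covering assumption makes it unnecessary; but that assumption is not in the literature and is not extractable ``for free'' from \cite{ford2}. (A smaller symptom of the same constraint: with your normalization $\tilde p^{\,k}+j$ and $a^k\equiv -j \pmod{p_j}$, the medium-prime step of Erd\H os--Rankin, which covers the positions $j\equiv 0 \bmod p$, would require $p\mid a^k$, contradicting $(a,P)=1$; the paper's shift $u-1$, for which the always-available $k$-th power residue $1$ does the job, is not cosmetic.)

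The second gap is your equidistribution input. You call for an asymptotic for Piatetski-Shapiro primes in a progression to a single modulus $P\asymp Y^{k/D}$, uniform in the modulus with a power saving, to be proved by a Vaughan decomposition plus van der Corput. For an individual modulus that is a fixed power of the range, such a statement is unknown even for ordinary primes (exceptional zeros obstruct it; even GRH reaches only modulus $Y^{1/2}$), so as stated your ``only genuinely new ingredient'' is an open problem, not a citable estimate. What the exponential-sum technology for $c_4\in(1,18/17)$ actually yields is the \emph{decoupled} statement of Baker, Banks, Br\"udern, Shparlinski and Weingartner (Theorem 8 of \cite{Banks}, the paper's Lemma \ref{lem42}), which expresses $\pi_{c_4}(w;d,a)$ in terms of the ordinary $\pi(w;d,a)$ with a power-saving error uniform in $d$; the ordinary count to the huge modulus $P(x)$ must then be handled by restricting to the arbitrarily large $x$ for which $P(x)$ is a \emph{good modulus} (Lemma \ref{lem32}) and applying Gallagher's theorem (Lemma \ref{lem33}). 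Your sketch omits the good-modulus restriction entirely --- it is also the reason the theorem asserts only ``infinitely many'' such powers rather than a statement for all large $x$ --- and without it the uniformity you require cannot be obtained unconditionally.
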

\noindent Again one can achieve prime avoidance by a slight modification of the proof.

\section{Construction of the matrix $\mathcal{M}$}

In several papers it was crucial for the estimate of prime numbers in arithmetic progressions $\bmod q$, that $q$ was a ``good modulus''.\\
We recall the definition:
\begin{definition}\label{def31} 
Let us call an integer $q>1$ a ``good" modulus, if $L(s,\chi)\neq 0$ for all characters $\chi\bmod\:q$ and all $s=\sigma+it$ with
$$\sigma>1-\frac{C_8}{\log\left[ q(|t|+1)\right]}.$$
This definition depends on the size of $C_8>0$.
\end{definition}

\begin{lemma}\label{lem32}
There is a constant $C_9>0$, such that in terms of $C_9$, there exist arbitrarily large values of $x$, for which the modulus 
$$P(x)=\prod_{p<x} p$$
is good.
\end{lemma}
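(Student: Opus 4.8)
The plan is to reduce the assertion to controlling the single possible exceptional (Siegel) zero, and then to exploit the extreme sparsity of exceptional characters to locate arbitrarily large primorials that escape its influence.

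First I would invoke the classical zero-free region for Dirichlet $L$-functions: there is an absolute constant $c_1>0$ such that for every modulus $q$ the product $\prod_{\chi \bmod q} L(s,\chi)$ has no zero in the region $\sigma \ge 1 - c_1/\log\!\big(q(|t|+2)\big)$, with the sole possible exception of one simple real zero $\beta$ attached to a real non-principal character. Taking the constant $C_8$ of Definition \ref{def31} to be a sufficiently small multiple of $c_1$ — this admissible value being the $C_9$ of the statement — the only obstruction to $P(x)$ being good is such an exceptional real zero. Since a zero of an imprimitive character $\chi \bmod P(x)$ in $\Re s>0$ is a zero of the inducing primitive character $\chi^\ast \bmod r$ (the additional Euler factors $\prod_{p\mid P(x),\, p\nmid r}(1-\chi^\ast(p)p^{-s})$ vanish nowhere in $\Re s>0$), and because $P(x)$ is squarefree, it suffices to rule out a real primitive character $\chi^\ast$ of conductor $r\mid P(x)$ whose $L$-function has a real zero $\beta > 1 - C_8/\log P(x)$.

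Next I would bring in the two standard facts that make exceptional zeros rare. By the Landau--Page theorem, after shrinking the defining constant if necessary, distinct exceptional real primitive characters have conductors $r_1<r_2<\cdots$ satisfying $r_{i+1}>r_i^{2}$, so that the number of exceptional conductors below $R$ is $O(\log\log R)$; and by Siegel's theorem, for every $\varepsilon>0$ one has $1-\beta \gg_\varepsilon r^{-\varepsilon}$. The upshot is that a fixed exceptional character of conductor $r_i$ and zero $\beta_i=1-\eta_i$ can poison $P(x)$ only for those $x$ with $r_i\mid P(x)$ and $\log P(x) < C_8/\eta_i$; in terms of the level $y=\log P(x)=\theta(x)$ this is a bounded band $y\in[\,\theta(p_{\max}(r_i)),\, C_8/\eta_i\,)$, whose length is $\ll_\varepsilon r_i^{\varepsilon}$ by Siegel. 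Moreover, by Page's theorem at most one exceptional character is active at any single level $y$, so these bands are pairwise disjoint.

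Finally I would assemble these pieces. Because the active bands are disjoint, the admissible conductors grow so rapidly, each band has bounded length, and each additionally requires the divisibility $r_i\mid P(x)$, the set of bad levels cannot contain arbitrarily long intervals; choosing $x$ with $\theta(x)$ lying in one of the resulting gaps yields a good modulus $P(x)$, and letting the gaps tend to infinity gives arbitrarily large such $x$. The main obstacle is precisely the exceptional zero treated in the last two paragraphs: everything else is the off-the-shelf zero-free region, and the entire content of the lemma is showing that the one possible Siegel zero, which a priori might spoil a long run of consecutive primorials, is confined by Landau--Page sparsity and Siegel's bound to a sparse, gap-leaving set of levels.
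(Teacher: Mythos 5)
The reduction in your first paragraph is sound and matches the standard setup: with $C_8$ a small multiple of the classical zero-free-region constant, and since a zero of an imprimitive character is a zero of its primitive inducer, the only way $P(x)$ can fail to be good is through a real zero $\beta=1-\eta$ of a real primitive character of squarefree conductor $r\mid P(x)$ with $\eta<C_8/\log P(x)$. The genuine gap is in your final assembly step. The three inputs you use there --- disjointness of the bands, the Landau--Page growth $r_{i+1}>r_i^{2}$, and Siegel's bound $\eta_i\gg_\varepsilon r_i^{-\varepsilon}$ --- do \emph{not} imply that the bad levels leave gaps. The band attached to $r_i$ lives in $[\,\approx\log r_i,\;C_8/\eta_i)$, and its right endpoint can be of size $r_i^{\varepsilon}=\exp(\varepsilon\log r_i)$, i.e.\ \emph{exponentially} large in the band's starting level; so your claim that ``each band has bounded length'' is false, and, worse, the growth condition $r_{i+1}>r_i^{2}$ only pushes the earliest possible start of band $i+1$ out to $\approx 2\log r_i$, far to the \emph{left} of where band $i$ can end. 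Nothing you state excludes the chaining scenario $r_{i+1}\approx\exp\bigl(C_8/\eta_i\bigr)$ (perfectly consistent with $r_{i+1}>r_i^2$, with Siegel, and with disjointness, since band $i+1$ then becomes active exactly when $\chi_i$ stops violating): the disjoint bands would tile an entire ray $[y_0,\infty)$ of levels, leaving no good $x$ at all. Disjoint intervals can cover a ray; conductor sparsity is polynomial in $r_i$ while a band's reach is exponential in $\log r_i$, so the comparison runs the wrong way. Indeed, a single hypothetical Siegel zero with tiny $\eta$ makes $P(x)$ bad for all $x$ in a range of levels of length $\approx C_8/\eta$, which is unbounded as far as anyone can prove --- so ``the set of bad levels cannot contain arbitrarily long intervals'' is not merely unproven in your write-up; it is not a statement current technology delivers.

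What is missing is a \emph{pairwise} application of Landau--Page at comparable levels, and this is exactly how the proof behind the paper's citation works (the paper quotes Lemma 3.7 of \cite{mr2}, which goes back to the first author's matrix-method papers). Assume every sufficiently large $x$ is bad; for such an $x$ let $\chi\bmod r$, $\beta=1-\eta$, be the exceptional data, and set $x'=p_{\max}(r)$, the largest prime factor of $r$ (note $r\to\infty$ as $x\to\infty$, and $p_{\max}(r)\gg\log r$ for squarefree $r$, so $x'$ is also large). Then $r\nmid P(x')=\prod_{p<x'}p$, so the exceptional character $\chi'\bmod r'$ belonging to the (by assumption also bad) modulus $P(x')$ is \emph{distinct} from $\chi$. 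Since $r$ is squarefree with all prime factors $\le x'$, one has $\log r\le\theta(x')+O(\log x')$ and $\log r'\le\log P(x')$, whence $\log(rr')\le(2+o(1))\,\theta(x')$; moreover both zeros exceed $1-C_8/\theta(x')$ (for $\beta$ because $\theta(x)\ge\theta(x')$). Landau's repulsion $\max(\eta,\eta')\ge c_0/\log(rr')\ge c_0/\bigl((2+o(1))\theta(x')\bigr)$ then gives a contradiction once $C_9=C_8<c_0/3$. In your band language, the same repulsion shows two distinct characters cannot both be active at levels $y<y'$ with $y'\le \bigl(c_0/3C_8\bigr)y$: consecutive bands are separated by a fixed \emph{multiplicative} gap, which is the statement your covering argument actually needs in place of mere set-disjointness. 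Note also that this route uses only Landau--Page, not Siegel, so it is effective; your appeal to Siegel is harmless for an ``arbitrarily large $x$'' statement but unnecessary.
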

\begin{proof}
This is Lemma 3.7 of \cite{mr2}.
\end{proof}

\begin{lemma}\label{lem33}
Let $q$ be a good modulus. Then  for $x\geq q^D$ we have
$$ \sum_{1<  l \leq Q}\sum_{\chi}^* \left| \sum_{x}^{x+h} \chi(p)\log p\right|  \ll h \exp\left(-c_5\: \frac{\log x}{\log Q}\right)\:,  $$
provided $x/Q\leq h<x$ and 
$$\exp(\log^{1/2} x)\leq Q\leq x^6\:,$$
(where $\displaystyle\sum^*$ denotes the sum over all primitive characters $\bmod\: l$). The value of the constant $c_5$ and the constant $D$ depends only on the value of $C_9$ in Lemma \ref{lem32}.
\end{lemma}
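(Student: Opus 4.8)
The plan is to run the classical route through the explicit formula for Dirichlet $L$-functions, with the zero-free region supplied by the good modulus hypothesis as the decisive input. First I would pass from the sum over primes to the von Mangoldt function, writing $\sum_{x<p\le x+h}\chi(p)\log p=\psi(x+h,\chi)-\psi(x,\chi)+E(\chi)$, where $\psi(y,\chi)=\sum_{n\le y}\Lambda(n)\chi(n)$ and $E(\chi)$ collects the contribution of proper prime powers. In the admissible range $x/Q\le h<x$ the prime powers are negligible after summation over $l$ and over the primitive characters mod $l$, so it suffices to bound $\sum_{1<l\le Q}\sum_{\chi}^{*}|\psi(x+h,\chi)-\psi(x,\chi)|$.

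Next, for each primitive nonprincipal character $\chi$ mod $l$ I would apply the truncated explicit formula
\[
\psi(y,\chi)=-\sum_{|\gamma|\le T}\frac{y^{\rho}}{\rho}+O\!\left(\frac{y\log^{2}(lT)}{T}\right),
\]
where $\rho=\beta+i\gamma$ runs over the nontrivial zeros of $L(s,\chi)$. Subtracting the values at $y=x+h$ and $y=x$ and using $|(x+h)^{\rho}-x^{\rho}|/|\rho|\le h\,x^{\beta-1}$ reduces the problem to estimating
\[
h\sum_{1<l\le Q}\sum_{\chi}^{*}\sum_{|\gamma|\le T}x^{\beta-1}+(\text{truncation error}).
\]
The truncation height $T$ can be taken to be a fixed power of $x$; since $x\ge P(x)^{D}$ and $Q\le x^{6}$, the error term $\ll x\log^{2}x/T$ per character is dominated once the lower bound $h\ge x/Q$ is used, with $D$ chosen large enough in terms of $C_9$.

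The heart of the argument is the inner double sum over zeros. Here I would invoke the good modulus property (Definition \ref{def31}, guaranteed by Lemma \ref{lem32}): a primitive character mod $l$ with $l\mid P(x)$ is induced by a character whose $L$-function shares, in $\Re s>0$, the zeros of a character mod $P(x)$, so the zero-free region $\beta\le 1-C_8/\log\!\big(Q(|\gamma|+1)\big)$ holds and the potential Siegel zero is excluded precisely because $P(x)$ is good. This gives $x^{\beta-1}\le\exp\!\big(-C_8\log x/\log(Q(|\gamma|+1))\big)$. Combining this with a zero-density (zero-counting) estimate to control $\sum_{l\le Q}\sum_{\chi}^{*}\#\{\rho:|\gamma|\le t\}$ and summing the resulting series in $t$ produces the claimed factor $\exp(-c_5\log x/\log Q)$, the surviving logarithmic powers being absorbed into the exponential, and the constants $c_5,D$ depending only on $C_9$.

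The main obstacle I anticipate is twofold: first, making the zero-free region available uniformly for all the characters in the sum — this is exactly the role of the good modulus hypothesis, which removes the single exceptional (Siegel) zero that the classical theory alone cannot preclude; and second, converting the pointwise zero-free region into the clean exponential saving, which demands a density estimate sharp enough that the accumulation of zeros higher up the critical strip does not erode the gain. By comparison, balancing the truncation error against $h\ge x/Q$ is routine once $T$ is fixed as a suitable power of $x$.
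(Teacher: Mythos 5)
The paper does not prove this lemma internally: it is quoted verbatim as Theorem 7 of Gallagher \cite{gallagher}, so the only thing to compare your attempt against is Gallagher's own argument --- and your sketch is indeed a reconstruction of its skeleton (explicit formula, good-modulus zero-free region, zero-density input). In that sense the approach is the right one. But the two steps you yourself flag as obstacles are exactly where the sketch is not yet a proof. First, the density estimate you invoke cannot be any of the classical ones carrying powers of $\log$: it must be the \emph{log-free} estimate $\sum_{l\le Q}\sum_{\chi}^{*}N(\sigma,T,\chi)\ll (Q^{2}T)^{c(1-\sigma)}$, which is Theorem 6 of the same paper of Gallagher (proved via Tur\'an's power-sum method and the large sieve) and is the real content of the lemma; your sketch treats it as a black box. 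Relatedly, your remark that ``surviving logarithmic powers are absorbed into the exponential'' is false in the upper part of the admissible range: when $Q$ is a fixed power of $x$, the ratio $\log x/\log Q$ is bounded, so $\exp(-c_5\log x/\log Q)$ is merely a positive constant and no factor of $\log^{B}x$ can be absorbed into it. The same constraint governs your prime-power and truncation error terms: with $h$ as small as $x/Q$ they must be beaten against $h\exp(-c_5\log x/\log Q)$, which for large $Q$ forces genuinely log-free comparisons, not just ``negligible after summation.''

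Second, the zero-free region. The good-modulus hypothesis (Definition \ref{def31}) speaks only of characters $\chi \bmod q$; a primitive character to an arbitrary modulus $l\le Q$ is in general \emph{not} induced by a character mod $q$, so your appeal to $l\mid P(x)$ is an artifact of the intended application (where $q=P(x)$ and the $l$ run over conductors of characters mod $P(x)$) rather than a proof of the lemma as stated. Gallagher's organization is the clean one: the unconditional density theorem controls all zeros away from $s=1$ for all conductors up to $Q$, and the goodness of the single modulus $q$ is used only to exclude the one possible exceptional (Siegel) zero that would otherwise dominate. So: correct skeleton, matching the cited source, but the log-free density theorem is assumed without proof, and the absorption-of-logs step as written would fail for $Q$ near the top of the range.
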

\begin{proof}
This is Theorem 7 of Gallagher \cite{gallagher}.
\end{proof}

\begin{lemma}\label{lem34}
Let $x$ be sufficiently large.
\[
y=C_{10}\: x\: \frac{\log x\log_3 x}{\log_2 x}\:, \tag{3.1}
\]
$C_{10}>0$ being a sufficiently small constant. Then there is an integer $m_0$ satisfying 
\[
 1\leq m_0< P(x) \tag{3.2}
\]
and an exceptional set $V$ satisfying 
\[
\#V\ll x^{1/2+\epsilon}\:,\ \ \text{$\epsilon>0$ arbitrarily small} \tag{3.3}
\]
such that 
\[
(m_0+1)^k+u-1 \tag{3.4}
\]
is composite for $2\leq u\leq y$, unless $u\in V$.
\end{lemma}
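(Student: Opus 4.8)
The plan is to realize the composite values through a covering system. Writing $n=m_0+1$, I want to choose the residues $m_0\bmod p$ for each prime $p<x$ so that for all but a sparse set of $u\in[2,y]$ there is a prime $p<x$ with $n^k+u-1\equiv 0\pmod p$; since $n^k$ will be astronomically larger than $x$, such divisibility forces $n^k+u-1$ to be composite. The single new feature compared with the covering arguments of \cite{konyagin} and \cite{ford2} is the $k$-th power constraint: the congruence $n^k\equiv 1-u\pmod p$ is solvable in $n$ if and only if $1-u$ is a $k$-th power residue $\bmod\ p$, and then $m_0\bmod p$ is pinned down up to $\gcd(k,p-1)$ choices. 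Thus a given $u$ can be ``killed'' only by those primes $p$ for which $1-u$ lies in the subgroup of $k$-th power residues, a positive proportion $\approx 1/\gcd(k,p-1)$ of all residues; this costs only constants depending on $k$ and is exactly why $C_{10}$ and the final constant are allowed to depend on $k$.

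First I would fix an intermediate cut-off $z<x$ and use the primes $p\le z$ to pass to rough survivors: for any fixed choice of $m_0\bmod p$ with $p\le z$, the integers $u$ for which $n^k+u-1$ carries a prime factor $\le z$ are removed automatically, and the surviving set $S\subseteq[2,y]$ of $u$ with $n^k+u-1$ free of prime factors $\le z$ has $\#S\asymp y/\log z$ by the fundamental lemma of sieve theory. Then I would run the Ford--Green--Konyagin--Tao--Maynard covering machinery on $S$ using the medium and large primes $z<p<x$: for each such $p$ one is free to assign the killed residue class, subject to it being of the form $1-w$ with $w$ a $k$-th power residue, and the hypergraph covering lemma (respectively the Maynard sieve-weight construction) then covers all of $S$ except a set $V$. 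Because each $u\in S$ is coverable by a fixed positive proportion of the admissible primes, the covering step applies after rescaling the relevant densities by the $k$-dependent factor.

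The two places that need genuine care are (i) verifying that the residue classes dictated by the covering step can actually be realized under the $k$-th power constraint, and (ii) the bound $\#V\ll x^{1/2+\epsilon}$. For (i) I would invoke the equidistribution of $k$-th power residues: the events ``$1-u$ is a $k$-th power residue $\bmod\ p$'' are independent enough across the primes $p$, which I would quantify through character sums and the large sieve, so that the proportion of admissible primes available to each survivor is asymptotically $1/\gcd(k,p-1)$ with negligible correlation. This same independence, fed into a second-moment estimate for the number of uncovered survivors, is what I expect to drive the strong exceptional bound (3.3); the prime-in-progressions input of Lemma \ref{lem33}, together with the good-modulus property of $P(x)$ from Lemma \ref{lem32}, supplies the uniformity needed to push the error terms below $x^{1/2+\epsilon}$. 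Finally I would reassemble $m_0\bmod P(x)$ by the Chinese Remainder Theorem from the chosen residues $m_0\bmod p$, select the representative with $1\le m_0<P(x)$ as in (3.2), and record that for every $u\in[2,y]\setminus V$ the number $n^k+u-1$ is divisible by a prime $p<x$ while vastly exceeding it, hence composite.

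The hard part will be (ii): obtaining the sharp bound $\#V\ll x^{1/2+\epsilon}$ rather than the much weaker $\asymp y/\log x$ that the crude sieve alone yields. This is where the full strength of the FGKT--Maynard covering — covering even the rough survivors with essentially no waste — must be combined with the $k$-th power equidistribution, and keeping the $k$-th power constraint from degrading the covering efficiency beyond a constant factor is the delicate point.
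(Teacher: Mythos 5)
Your overall skeleton is not unreasonable, but it should first be said that the paper itself does not prove Lemma \ref{lem34} at all: its proof is the single line ``This is Lemma 3.10 of \cite{konyagin}'', and the set $V$ is constructed explicitly in Lemma 3.5 of \cite{konyagin} (given that $y$ in (3.1) sits at the $g_2$-scale rather than the Rankin $g_1$-scale, the version actually needed is the adaptation carried out in \cite{mr2}, which grafts the $k$-th power constraint of \cite{konyagin} onto the machinery of \cite{ford2}). Measured against that cited construction, your plan gets the right qualitative points: the medium primes can be used with $m_0\equiv 0\bmod p$, which makes $(m_0+1)^k+u-1\equiv u\bmod p$ and so carries no $k$-th power obstruction at all; the constrained classes $1-w^k\bmod p$ cost only a factor $\gcd(k,p-1)$, absorbed into constants depending on $k$; and the final assembly by the Chinese Remainder Theorem is as you say.

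The genuine gap is exactly the point you flag yourself: the bound (3.3). In your proposal $V$ is defined as ``whatever the covering misses'', and you hope to bound it by a second-moment argument fed by equidistribution of $k$-th power residues plus Lemma \ref{lem33}. This cannot work quantitatively. After the sieve stage you have $\asymp y/\log z\gg x/\log x$ survivors, to be covered by classes attached to the $\asymp x/\log x$ remaining primes; any failure proportion $\delta$ in this matching leaves $\gg \delta\, x/\log x$ exceptions, so reaching $\#V\ll x^{1/2+\epsilon}$ would require $\delta$ to be a negative \emph{power of $x$}. No input you invoke delivers that: character-sum and large-sieve equidistribution of $k$-th power residues in this range saves powers of $\log x$, and Lemma \ref{lem33} saves at most $\exp\left(-c_5\,\frac{\log x}{\log Q}\right)$, which for $Q\geq\exp(\log^{1/2}x)$ is never a power of $x$ --- moreover Lemma \ref{lem33} counts primes in progressions and plays no role in building $m_0$ and $V$ (it enters only in Section 5, in the estimation of $\#\mathcal{R}_1$ and $\#\mathcal{R}_2$). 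In the cited proof the exceptional set is not a covering leftover at all: $V$ is constructed \emph{deterministically}, as the set of those $u$ for which the required congruences $w^k\equiv 1-u \bmod p$ cannot be realized, and the bound $x^{1/2+\epsilon}$ comes from directly counting such $u$ by elementary solution-counting, not from covering efficiency. So your proposal, as written, would at best yield a lemma with $\#V$ of size $x/(\log x)^{A}$, which is not the statement to be proved.
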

\begin{proof}
This is Lemma 3.10 of \cite{konyagin}.
\end{proof}

The construction of $V$, which is due to Ford, Heath-Brown and Konyagin \cite{konyagin} is described in Lemma 3.5 of \cite{konyagin}. We now construct the matrix $\mathcal{M}$.

\begin{definition}\label{def35}
Choose $x$, such that $P(x)$ is a good modulus. Let $y$ satisfy (3.1), $V$ satisfy (3.3) and $m_0$ satisfy (3.2) and (3.4). We let
$$\mathcal{M}:=(a_{r,u})_{\substack{1\leq r\leq P(x)^{D-1}\\ 1\leq u\leq y}}\:,$$
where 
$$a_{r,u}:=(m_0+1+rP(x))^k+u-1$$
and the constant $D$ will be specified later.\\
For $1\leq r\leq P(x)^{D-1}$ we denote by
$$R(r):=(a_{r,u})_{1\leq u\leq y}$$
the $r$-th row of $\mathcal{M}$ and for $1\leq u\leq y$ we denote by
$$C(u):=(a_{r,u})_{1\leq r\leq P(x)^{D-1}}$$
the $u$-th column of $\mathcal{M}$.
\end{definition}

\section{Piatetski-Shapiro primes in arithmetic progressions}

\begin{definition}\label{def41}
Let $c_6>1$. Let $a$ and $d$ be coprime integers, $w\geq 1$. Then we let
$$\pi_{c_6}(w; d, a)=\#\{ p\leq w\::\: p\in\mathcal{P}^{(c_6)},\ p\:\equiv a \bmod\: d\}$$
$$\mathcal{P}^{(c_6)}=\{p\ \text{prime}\::\: p=[l^{c_6}]\ \text{for some}\ l\}\:.$$
\end{definition}

\begin{lemma}\label{lem42}
Let $a$ and $d$ be coprime integers, $d\geq 1$. For fixed $c_6\in(1, 18/17)$ we have (with $\gamma=1/c_6$):
$$\pi_{c_6}(w; d, a)= \gamma w^{\gamma-1} \pi(w; d, a)+\gamma(1-\gamma)\int_2^w u^{\gamma-2} \pi(u; d, a)du+O\left( w^{\frac{17}{39}+\frac{7\gamma}{19}+\epsilon} \right)\:.$$
\end{lemma}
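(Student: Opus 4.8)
The plan is to reduce the counting function $\pi_{c_6}(w;d,a)$ to a main term plus a sawtooth error by means of the standard Piatetski--Shapiro detector, and then to estimate the error through exponential sums over primes in the progression $a \bmod d$. First I would record the elementary equivalence that a prime $p$ lies in $\mathcal P^{(c_6)}$, i.e. $p=\lfloor l^{c_6}\rfloor$ for some $l$, if and only if the half-open interval $[p^\gamma,(p+1)^\gamma)$ contains an integer. Since $\gamma=1/c_6<1$, we have $(p+1)^\gamma-p^\gamma\to 0$ as $p\to\infty$, so this interval contains at most one integer once $p$ is large, and the indicator of membership equals the counting expression
$$\mathbf{1}_{\mathcal P^{(c_6)}}(p)=\lfloor -p^\gamma\rfloor-\lfloor -(p+1)^\gamma\rfloor.$$
Writing $\psi(t):=t-\lfloor t\rfloor-\tfrac12$ for the sawtooth function and summing over $p\le w$ with $p\equiv a\,(d)$ gives
$$\pi_{c_6}(w;d,a)=\sum_{\substack{p\le w\\ p\equiv a\,(d)}}\bigl((p+1)^\gamma-p^\gamma\bigr)+\sum_{\substack{p\le w\\ p\equiv a\,(d)}}\bigl(\psi(-(p+1)^\gamma)-\psi(-p^\gamma)\bigr).$$

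For the first (main) sum I would use the expansion $(p+1)^\gamma-p^\gamma=\gamma p^{\gamma-1}+O(p^{\gamma-2})$; the error contributes $O(1)$ because $\gamma-2<-1$, and Abel summation applied to $\sum_{p\le w,\,p\equiv a}\gamma p^{\gamma-1}$ against $\pi(t;d,a)$ produces exactly
$$\gamma w^{\gamma-1}\pi(w;d,a)+\gamma(1-\gamma)\int_2^w u^{\gamma-2}\pi(u;d,a)\,du,$$
which are the two claimed main terms. It then remains to show that the sawtooth sums are $O\bigl(w^{17/39+7\gamma/19+\epsilon}\bigr)$.

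The heart of the matter, and the main obstacle, is estimating $\sum_{p\le w,\,p\equiv a}\psi(-p^\gamma)$ together with its companion involving $(p+1)^\gamma$. I would first replace $\psi$ by its truncated Fourier expansion (in Vaaler's form) of length $H$, reducing the problem, up to a truncation error that decreases as $H$ grows, to the linear exponential sums $\sum_{p\le w,\,p\equiv a}e(hp^\gamma)$ for $1\le|h|\le H$, where $e(\theta)=e^{2\pi i\theta}$. Detecting the congruence through the Dirichlet characters modulo $d$ by orthogonality turns this into a bound for $\sum_{p\le w}\chi(p)\,e(hp^\gamma)$. I would estimate these sums via Vaughan's identity, splitting them into Type I and Type II bilinear pieces, and then apply van der Corput's method (exponent pairs) to the inner sums $\sum_n e(hn^\gamma)$, the oscillation of $n^\gamma$ furnishing the saving. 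Balancing the truncation level $H$ against these exponential-sum bounds yields the exponent $\tfrac{17}{39}+\tfrac{7\gamma}{19}+\epsilon$, and the constraint $c_6\in(1,18/17)$, equivalently $\gamma>17/18$, is precisely what makes this error of smaller order than the main term $\asymp w^\gamma/(\phi(d)\log w)$. The delicate points I expect to be most demanding are the uniform bookkeeping of the dependence on $h$ and on the character $\chi$ across the ranges of the bilinear variables, and the choice of exponent pair that balances the Type I and Type II contributions so as to reach exactly the stated exponent.
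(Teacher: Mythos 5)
The paper does not prove this lemma at all: it is quoted verbatim as Theorem 8 of Baker, Banks, Br\"udern, Shparlinski and Weingartner \cite{Banks}, so there is no internal argument to compare yours against. Your elementary reduction is correct and is indeed the standard opening of the proof in that reference: the detector $\mathbf{1}_{\mathcal{P}^{(c_6)}}(p)=\lfloor -p^\gamma\rfloor-\lfloor -(p+1)^\gamma\rfloor$ is valid (it counts the integers in $[p^\gamma,(p+1)^\gamma)$, of which there is eventually at most one), the split into the smooth part $(p+1)^\gamma-p^\gamma$ plus sawtooth differences is right, and partial summation of $\gamma p^{\gamma-1}$ against $\pi(t;d,a)$ produces exactly the two stated main terms, the approximation $(p+1)^\gamma-p^\gamma=\gamma p^{\gamma-1}+O(p^{\gamma-2})$ costing only $O(1)$ since $\gamma-2<-1$.

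As a standalone proof, however, your proposal has a genuine gap exactly where the lemma has its entire content: the bound $O\left(w^{\frac{17}{39}+\frac{7\gamma}{19}+\epsilon}\right)$ for the sums $\sum_{p\le w}\chi(p)e(hp^\gamma)$, uniformly in $\chi\bmod d$ and in $1\le |h|\le H$, is asserted to follow from ``Vaaler plus Vaughan plus exponent pairs, suitably balanced,'' but no ranges, no choice of exponent pair, and no balancing are exhibited, and nothing in the sketch singles out this particular exponent rather than a weaker one. Uniformity in $d$ is not a side issue here: in the paper the lemma is applied with modulus $\tau P(x)$, a fixed-power-size fraction of $w=P(x)^D$, so the Type I/Type II estimates must hold uniformly for characters to large moduli --- this is the real difficulty resolved in \cite{Banks}. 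Your closing remark is also incorrect as stated: the comparison $\frac{17}{39}+\frac{7\gamma}{19}<\gamma$ already holds for $\gamma>\frac{323}{468}\approx 0.69$, so the restriction $c_6<18/17$ (i.e.\ $\gamma>17/18$) is \emph{not} what makes the error term smaller than the main term; that stronger constraint comes from the internal requirements of the exponential-sum analysis in \cite{Banks}. In sum: your reduction is correct and standard, but the analytic core is taken on faith rather than proved --- which, to be fair, is effectively what the paper itself does by citing Theorem 8 of \cite{Banks}.
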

\begin{proof}
This is Theorem 8 of \cite{Banks}.
\end{proof}

\section{Conclusion of the Proof}

\begin{definition}\label{def41}
We let $l(r):=m_0+1+rP(x)$,
$$\mathcal{R}_1(\mathcal{M}):=\{r\:: 1\leq r\leq P(x)^{D-1},\ l(r)\in\mathcal{P}^{(c_6)}\}\:,$$
$$\mathcal{R}_2(\mathcal{M}):=\{r\:: 1\leq r\leq P(x)^{D-1},\ r\in\mathcal{R}_1(\mathcal{M}),\ {R}(r)\ \text{contains a prime number}\}.$$
\end{definition}

We observe that each row ${R}(r)$ with $r\in\mathcal{R}_1(\mathcal{M})$ has as its first element $a_{r,1}$, the $k$-th power of the prime $l(r)\in\mathcal{P}^{(c_6)}$. We now conclude the proof of Theorem \ref{thm21} by showing that the set  $\mathcal{R}_1(\mathcal{M})\setminus \mathcal{R}_2(\mathcal{M})$ is non-empty.

\begin{lemma}\label{lem52}
We have
$$\#\mathcal{R}_1(\mathcal{M})=\frac{P(x)^{D\gamma}}{\phi(P(x))}\: \left(1+O\left( e^{-c_7D}\right)\right)\:.$$
\end{lemma}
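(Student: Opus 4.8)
The plan is to read $\#\mathcal{R}_1(\mathcal{M})$ directly as a counting function for Piatetski-Shapiro primes in an arithmetic progression and then to invoke Lemma \ref{lem42} together with the good-modulus prime number theorem. As $r$ runs through $1\le r\le P(x)^{D-1}$, the numbers $l(r)=m_0+1+rP(x)$ run through the residue class $m_0+1 \bmod P(x)$ and lie in the interval $(P(x),w]$ with $w:=m_0+1+P(x)^{D}$. The construction of $m_0$ underlying Lemma \ref{lem34} yields $(m_0+1,P(x))=1$ (otherwise every $l(r)$ would be divisible by a fixed prime $<x$ and $\mathcal{R}_1(\mathcal{M})$ would be empty), so the class is admissible. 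Writing $d=P(x)$ and $a=m_0+1$,
\[
\#\mathcal{R}_1(\mathcal{M})=\pi_{c_6}(w;d,a)-\pi_{c_6}(P(x);d,a)=\pi_{c_6}(w;d,a)+O\!\left(P(x)^{\gamma}\right),
\]
the initial-segment term being negligible against what follows once $(D-1)\gamma>1$.

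Next I would feed this into Lemma \ref{lem42}, which expresses $\pi_{c_6}(w;d,a)$ through the ordinary prime counts $\pi(u;d,a)$ plus a power-saving remainder. The key analytic input is that $P(x)$ is a good modulus (Lemma \ref{lem32}): the zero-free region of Definition \ref{def31}, via the explicit formula or equivalently via Lemma \ref{lem33}, gives for $u\ge P(x)$
\[
\pi(u;d,a)=\frac{\mathrm{Li}(u)}{\phi(P(x))}\left(1+O\!\left(\exp\!\left(-c\,\frac{\log u}{\log P(x)}\right)\right)\right).
\]
At the top of the range $u\asymp w=P(x)^{D}$ one has $\log u/\log P(x)\asymp D$, so the relative error is $O(e^{-cD})$; this is the mechanism producing the factor $1+O(e^{-c_7 D})$ in the statement.

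I would then substitute this into the two main terms of Lemma \ref{lem42} and combine them. Up to the controlled error the main term is
\[
\frac{1}{\phi(P(x))}\left(\gamma\, w^{\gamma-1}\mathrm{Li}(w)+\gamma(1-\gamma)\int_2^{w}u^{\gamma-2}\mathrm{Li}(u)\,du\right).
\]
An integration by parts (or the elementary asymptotic $\int_2^w u^{\gamma-1}/\log u\,du\sim w^{\gamma}/(\gamma\log w)$) shows the bracket is $\sim w^{\gamma}/\log w$, so the main term equals $\frac{1}{\phi(P(x))}\cdot\frac{w^{\gamma}}{\log w}$; recalling $w=P(x)^{D}$ this reproduces the asserted form $P(x)^{D\gamma}/\phi(P(x))$ up to the slowly varying factor $\log w$, with relative error $O(e^{-c_7 D})$. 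The contribution of the range $2\le u\le P(x)$, where $\pi(u;d,a)$ is essentially $0$, is absorbed into a negligible $O(1)$.

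The main obstacle is the uniform bookkeeping of the error sources rather than any single hard estimate. First, the Piatetski-Shapiro remainder $O\!\left(w^{17/39+7\gamma/19+\epsilon}\right)$ from Lemma \ref{lem42} must be dominated by $w^{\gamma}/\phi(P(x))$; since $c_6<18/17$ forces $\gamma>17/18$, a short computation gives $17/39+7\gamma/19<\gamma$, so the remainder is genuinely of smaller order. Second, one must verify that replacing $\pi(u;d,a)$ by its good-modulus approximation inside the integral does not accumulate error: the relative error $\exp(-c\log u/\log P(x))=u^{-c/\log P(x)}$ decays in $u$ and is concentrated near $u=w$, so the integrated error stays $O(e^{-c_7 D})$ relative to the main term. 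Handling these two points, together with the harmless small-$u$ and initial-segment contributions, completes the proof.
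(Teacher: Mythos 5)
Your proposal follows essentially the same route as the paper's proof: apply Lemma \ref{lem42} with $w=P(x)^D$, $d=P(x)$, $a=m_0+1$, evaluate $\pi(u;P(x),m_0+1)$ through the good-modulus property of $P(x)$ and Gallagher's estimate (Lemma \ref{lem33}) with relative error $O(e^{-c_7D})$, and then combine the two main terms of Lemma \ref{lem42}; your additional bookkeeping (coprimality of $m_0+1$ with $P(x)$, dominance of the $w^{17/39+7\gamma/19+\epsilon}$ remainder via $\gamma>17/18$, the initial-segment adjustment) consists of details the paper leaves implicit. The one point you must not gloss over is the factor $\log w=D\log P(x)$ in your denominator: it is genuinely present and cannot be absorbed into $1+O(e^{-c_7D})$, so your computation (and indeed the paper's own, carried to the end, since $\theta(u,P(x),m_0+1)\sim u/\phi(P(x))$ gives $\pi(u,P(x),m_0+1)\sim u/(\phi(P(x))\log u)$) actually yields $\#\mathcal{R}_1(\mathcal{M})=\frac{P(x)^{D\gamma}}{\phi(P(x))\log (P(x)^D)}\left(1+O\left(e^{-c_7D}\right)\right)$, meaning the lemma as printed omits this logarithmic factor — an omission that is harmless for the final comparison with $\#\mathcal{R}_2(\mathcal{M})$, where the same factor reappears through the conversion $\pi=\theta/\log u\,(1+O(1/\log u))$, but you should state the corrected asymptotic explicitly rather than describe $\log w$ as a ``slowly varying'' factor that can be waved away.
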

\begin{proof}
We apply Lemma \ref{lem42} with $P(x)^D$ instead of $w$, $d=P(x)$, $a=m_0+1$ and obtain 
\begin{align*}
\#\mathcal{R}_1(\mathcal{M})&=\gamma P(x)^{D(\gamma-1)}\pi(P(x)^D, P(x), m_0+1)\\
&\ \ +\gamma(1-\gamma)\int_2^{P(x)^D} u^{\gamma-2} \pi(u, P(x), m_0+1)\: du+O\left(P(x)^{D\left(\frac{17}{39}+\frac{7\gamma}{19}+\epsilon\right)}\right)\:.\tag{5.1}
\end{align*}
We now use  Dirichlet-characters to evaluate $\pi(u, P(x), m_0+1)$. We use the familiar definition
$$\theta(u, g, r)=\sum_{\substack{p\leq u \\ p\equiv r \bmod g}}\log p$$
and obtain
\[
\theta(u, P(x), m_0+1)=\frac{1}{\phi(P(x))}\sum_{p\leq u}\log p+\frac{1}{\phi(P(x))}\sum_{\substack{\chi \bmod P(x)\\ \chi\neq \chi_0}}\sum_{p\leq u}\chi(p)\log p\:.\tag{5.2}
\]
Assume that $\chi \bmod\: r$ is induced by the primitive character $\chi^*$. Then it is well-known that
$$\sum_{p\leq u}|\chi(p)\log p-\chi^*(p)\log p|\leq \log(ru)^2\:.$$
We obtain:
\begin{align*}
&\left|\theta(u, P(x), m_0+1)-\frac{1}{\phi(P(x))}\sum_{p\leq u}\log p\right| \tag{5.3}\\  
&\leq \frac{1}{\phi(P(x))}\sum_{1<r\leq P(x)}\sum_{\chi^*\bmod\: r}^{*}\left|\sum_{p\leq u}\chi(p)\log p\right|\:,
\end{align*}
where $\displaystyle\sum^*$ denotes the summation over primitive characters.\\
From Lemma \ref{lem33} we obtain:
$$\theta(u, P(x), m_0+1)=\frac{1}{\phi(P(x))}\left(\sum_{p\leq u}\log p\right)\left(1+O\left(e^{-c_7D}\right)\right)\:.$$
Lemma \ref{lem52} now follows from (5.1), (5.2) and (5.3).
\end{proof}

We now estimate $\mathcal{R}_2(\mathcal{M})$. We have
$$|\mathcal{R}_2(\mathcal{M})|\leq \sum_{v\in V} t(v)\:,$$
where
$$t(v)=\#\{ r\::\: l(r)\in \mathcal{P}^{(c_6)}, l(r)^k+v-1\ \text{prime}  \}\:.$$
To obtain an upper estimate for $t(v)$ we replace the condition $l(r)^k+v-1$ prime by the condition 
$$(l(r)^k+v-1, Q(x))=1\:,\ \ \text{where}\ \ Q(x)=\prod_{\substack{x<p\leq P(x) \\ p\ \text{prime}}} p\:.$$
Let
\[
N(v):=\#\{r\::\: l(r)\in \mathcal{P}^{(c_6)}: (l(r)^k+v-1, Q(x))=1\}\:. \tag{5.4}
\]
By the sieve of Eratosthenes we have:
\[
N(v)=\sum_{r:l(r)\in \mathcal{P}^{(c_6)}, l(r)\leq P(x)^D}\ \  \sum_{\tau\mid (l(r)^k+v-1, Q(x))} \mu(\tau) \tag{5.5}
\]
We now apply a Fundamental Lemma in the theory of Combinatorial Sieves, replacing the M\"obius functions by functions of compact support.

\begin{lemma}\label{lem53}
Let $\kappa>0$ and $y>1$. There exist two sets of real numbers $\Lambda^+=(\lambda_d^+)$ and $\Lambda^-=(\lambda_d^-)$ depending only on $\kappa$ and $y$ with the following properties:
\begin{align*}
&\lambda_1^{\pm}=1\:,\tag{5.6}\\
&|\lambda_d^{\pm}|\leq 1\:,\ \text{if}\ 1<d<y\:,\tag{5.7}\\
& \lambda_d^{\pm}=0\:,\ \text{if}\ d\geq y\:,\ \text{and for any integer $n>1$,}\tag{5.8}\\
&\sum_{d\mid n} \lambda_d^- \leq 0 \leq  \sum_{d\mid n} \lambda_d^+ \:.\tag{5.9}
\end{align*}
Moreover, for any multiplicative function $g(d)$ with $0\leq g(p)<1$ and satisfying the dimension condition
$$\prod_{w\leq p< z}(1-g(p))^{-1}\leq \left(\frac{\log z}{\log w}\right)^\kappa \left(1+\frac{K}{\log w}\right)$$
for all $w, z$ such that $2\leq w<z\leq y$ we have
$$\sum_{d\mid P(z)}\lambda_d^{\pm} g(d)=\left(1+O\left(e^{-s}\left(1+\frac{K}{\log z}\right)^{10}\right)\right) \prod_{p<z}(1-g(p))\:.$$
\end{lemma}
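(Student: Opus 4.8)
The statement is the Fundamental Lemma of the combinatorial sieve, so the plan is to construct the weights $\lambda_d^{\pm}$ explicitly as a truncation of the M\"obius function and then verify the four structural properties (5.6)--(5.9) together with the main estimate. I would use the Rosser--Iwaniec combinatorial (``beta'') sieve, equivalently a Brun-type sieve. Restrict attention to squarefree $d$ all of whose prime factors are $<z$, written in decreasing order as $d=p_1p_2\cdots p_r$ with $p_1>p_2>\cdots>p_r$. One declares $d$ to lie in the support of $\lambda^{+}$ (resp.\ $\lambda^{-}$) according to whether the partial products $p_1\cdots p_j\,p_j^{\,\beta}$ remain below the level $y$ for the relevant parities of the index $j$, and sets $\lambda_d^{\pm}=\mu(d)$ on this support and $\lambda_d^{\pm}=0$ otherwise, with $\lambda_1^{\pm}=1$. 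By construction the support is contained in $\{d<y\}$, which is (5.8); the normalization $\lambda_1^{\pm}=1$ is (5.6); and since every nonzero value equals $\mu(d)=\pm1$ we get $|\lambda_d^{\pm}|\le 1$, which is (5.7).

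The sign condition (5.9) is the combinatorial heart of the matter. I would establish it by the Buchstab iteration: writing $S^{\pm}(n)=\sum_{d\mid n}\lambda_d^{\pm}$ and peeling off the largest prime factor of $n$ at each stage, the truncation rule is arranged precisely so that every block of terms discarded by the cut-off contributes with a definite sign. An induction on the number of prime factors of $n$, tracking the parity of the stage at which truncation occurs, then yields $\sum_{d\mid n}\lambda_d^{-}\le 0\le\sum_{d\mid n}\lambda_d^{+}$ for every integer $n>1$.

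For the main estimate the plan is to compare the truncated sum $\sum_{d\mid P(z)}\lambda_d^{\pm}g(d)$ with the complete Euler product $\prod_{p<z}(1-g(p))=\sum_{d\mid P(z)}\mu(d)g(d)$. The difference is supported exactly on those $d$ removed by the truncation, namely the $d$ for which some partial product $p_1\cdots p_j\,p_j^{\,\beta}$ exceeds $y$. Estimating the multiplicative sum over such $d$ by means of the dimension condition $\prod_{w\le p<z}(1-g(p))^{-1}\le(\log z/\log w)^{\kappa}(1+K/\log w)$ converts the size constraint on the partial products into a constraint on $\log d$, and summation by parts against the parameter $s=\log y/\log z$ produces a tail bounded, up to constants, by $e^{-s}\prod_{p<z}(1-g(p))$. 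Each application of the dimension condition introduces a factor $(1+K/\log w)$, and the finitely many iterations needed to control the tail account for the power $(1+K/\log z)^{10}$ appearing in the error term.

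The step I expect to be the main obstacle is obtaining the sharp exponential saving $e^{-s}$ while keeping the correct sign, since these two demands pull against each other: the truncation must be coarse enough that the sign condition (5.9) survives the Buchstab iteration, yet fine enough that the discarded tail is genuinely of relative size $e^{-s}$. Reconciling them is exactly what the beta-sieve weights are engineered to do, and checking that the chosen threshold $\beta=\beta(\kappa)$ delivers both simultaneously is the delicate point; the remaining bounds are then routine applications of the dimension condition.
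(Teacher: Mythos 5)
The paper offers no proof of its own here: it simply cites Fundamental Lemma 6.3 of Iwaniec--Kowalski \cite{IKW}, and your outline is a faithful sketch of exactly the beta-sieve argument proved there --- the Rosser--Iwaniec truncation $p_1\cdots p_{j-1}p_j^{\beta+1}<y$ for the appropriate parities of $j$, the sign condition (5.9) via Buchstab-type induction on the largest prime factor, and the tail estimate against the dimension condition yielding the $e^{-s}$ saving with $s=\log y/\log z$ (a parameter the paper's statement in fact leaves undefined, which you correctly identified). So your approach is essentially the same as that of the cited source, with no gap beyond the expected level of detail in a sketch.
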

\begin{proof}
This is Fundamental Lemma 6.3 in \cite{IKW}.
\end{proof}

For $\tau\mid Q(x)$ let $\rho(\tau, v)$ be the number of solutions of 
$$w^k+v-1 \equiv 0 \bmod\: \tau\:.$$
The function $\rho(\cdot, v)$ is multiplicative:
$$\rho(\tau, v)=\prod_{\tilde{p}\mid \tau} \rho(\tilde{p}, v)\:.$$
We now apply Lemma \ref{lem53} with $g$ given by
\begin{eqnarray}
\nonumber\\
g(\tilde{p})&=&\left\{ 
  \begin{array}{l l}
    \rho(\tilde{p}, v), \ \ \  \text{if $x\leq \tilde{p}\leq P(x)$} \vspace{2mm}\\ 
    0, \ \ \  \text{otherwise}\vspace{2mm}\\ 
  \end{array} \right.
\nonumber
\end{eqnarray}
and choose $\lambda_d^+$ satisfying (5.7), (5.8), (5.9), where $y=P(x)^E$, $E$ to be determined later.\\
From (5.1) and (5.2), we obtain
$$N(v)\leq \sum_{\tau\mid Q(x)} \lambda^+(\tau)\sum_{\substack{r\::\: l(r)\in \mathcal{P}^{(c_6)},\ l(r)\leq P(x)^D \\ l(r)^k+v-1\equiv 0 \bmod\: \tau}} 1$$

For $\tau\mid Q(x)$ let $\mathcal{S}(\tau, v)$ be the solution set of 
$$s^k+v-1\equiv 0\bmod\: \tau\:.$$
For $s\in \mathcal{S}(\tau, v)$ the set of congruences 
\begin{align*}
l(r)&\equiv\: m_0+1 \bmod\: P(x)\\
l(r)&\equiv\: s \bmod\: \tau
\end{align*}
coincides with the solution set of a single congruence
$$l(r)\equiv \: z(s) \bmod\: \tau P(x)\:.$$
Thus we have
\begin{align*}
&\sum_{\substack{r\::\: l(r)\in \mathcal{P}^{(c_6)},\ l(r)\leq P(x)^D \\ l(r)^k+v-1\equiv 0 \bmod\: \tau}} 1=
\sum_{s\in\mathcal{S}(\tau, v)}\ \ \sum_{\substack{p\in  \mathcal{P}^{(c_6)},\ p\leq P(x)^D\\ p\equiv\: z(s)\bmod\: \tau P(x)}} 1\\
&=\sum_{s\in\mathcal{S}(\tau, v)} \pi_{c_6}(P(x)^D, \tau P(x), z(s))\\
&=\sum_{s\in\mathcal{S}(\tau, v)} \bigg( \gamma P(x)^{D(\gamma-1)} \pi\left(P(x)^D, \tau P(x), z(s)\right)\\
&\ \ \ \ \ \ +\gamma(1-\gamma)\int_2^{P(x)^D} u^{\gamma-2} \pi(u, \tau P(x), z(s)) du \bigg)\:.
\end{align*}
We write
$$\pi(u, q, a)=\frac{\theta(u, q, a)}{\log u}\ \left(1+O\left(\frac{1}{\log u}\right)\right)$$
and make use of Dirichlet characters.\\
For the non-principal characters we again apply Lemma \ref{lem33}.\\
In the treatment of the principal character we switch from the function $\lambda^+$ to the M\"obius function. Using Lemma \ref{lem53} we obtain
\begin{align*}
&\sum_{\tau\mid Q(x)} \lambda^+(\tau)\:\frac{\rho(\tau, v)}{\phi(\tau)\phi(P(x))}P(x)^{D\gamma}\left(1+O\left(e^{-c_{11}D}\right)\right)\\
&\leq  \frac{P(x)^{D\gamma}}{\phi(P(x))}\:\left(1+O\left(e^{-s}\left(1+\frac{\kappa}{\log z}\right)^{10}\right)\right)\: \prod_{x< \tilde{p}\leq P(x)}\left(1-\frac{\rho(p, v)}{p}\right)\\
&\ll \frac{P(x)^{D\gamma}}{\phi(P(x))}\: \frac{(\log x) |x|^\epsilon}{x}\:,
\end{align*}
by Lemma 3.1 of \cite{gallagher}.\\
The proof is now finished by the estimate (3.3). \qed

\vspace{10mm}

\end{document}